
\documentclass[12pt,fleqn]{article}

\usepackage{amsmath,amssymb,amsthm}
\usepackage{geometry}
\usepackage[pdfpagemode=UseNone,pdfstartview=FitH,hypertex]{hyperref}

\newcommand{\abs}[1]{\left|#1\right|}
\newcommand{\bdry}[1]{\partial #1}

\newcommand{\closure}[1]{\overline{#1}}
\newcommand{\dint}{\ds{\int}}
\newcommand{\ds}[1]{\displaystyle #1}
\newcommand{\eps}{\varepsilon}
\newcommand{\half}{\frac{1}{2}}
\newcommand{\norm}[2][]{\left\|#2\right\|_{#1}}
\renewcommand{\O}{\text{O}}
\renewcommand{\o}{\text{o}}
\newcommand{\PS}[1]{$(\text{PS})_{#1}$}
\newcommand{\pnorm}[2][]{\if #1'' \left|#2\right|_p \else \left|#2\right|_{#1} \fi}
\newcommand{\QED}{\mbox{\qedhere}}
\newcommand{\R}{\mathbb R}
\newcommand{\seq}[1]{\left(#1\right)}
\newcommand{\set}[1]{\left\{#1\right\}}
\newcommand{\vol}[1]{\left|#1\right|}

\DeclareMathOperator{\divg}{div}

\newenvironment{enumroman}{\begin{enumerate}

}{\end{enumerate}}

\newtheorem{lemma}{Lemma}[section]
\newtheorem{theorem}[lemma]{Theorem}

\numberwithin{equation}{section}

\title{\bf Positive solutions of semipositone elliptic problems with critical Trudinger-Moser nonlinearities\thanks{{\em MSC2010:} Primary 35J92, Secondary 35B33, 35B09, 35B45
\newline \indent\; {\em Key Words and Phrases:} semipositone $N$-Laplacian problems, critical Trudinger-Moser nonlinearities, positive solutions, uniform $C^{1,\alpha}$ a priori estimates}}
\author{\bf Kanishka Perera\\
Department of Mathematical Sciences\\
Florida Institute of Technology\\
Melbourne, FL 32901, USA\\
\em kperera@fit.edu\\
[\bigskipamount]
\bf Inbo Sim\\
Department of Mathematics\\
University of Ulsan\\
Ulsan 44610, Republic of Korea\\
\em ibsim@ulsan.ac.kr}
\date{}

\begin{document}

\maketitle

\begin{abstract}
We prove the existence of a positive solution to a semipositone $N$-Laplacian problem with a critical Trudinger-Moser nonlinearity. The proof is based on obtaining uniform $C^{1,\alpha}$ a priori estimates via a compactness argument. Our result is new even in the semilinear case $N = 2$, and our arguments can easily be adapted to obtain positive solutions of more general semipositone problems with critical Trudinger-Moser nonlinearities.
\end{abstract}

\section{Introduction}

Elliptic problems with critical Trudinger-Moser nonlinearities have been widely investigated in the literature. We refer the reader to the survey paper of de Figueiredo et al.\! \cite{MR2772124} for an overview of recent results on Trudinger-Moser type inequalities and related critical problems. A model critical problem of this type is
\[
\left\{\begin{aligned}
- \Delta_N\, u & = \lambda\, |u|^{N-2} u\, e^{\beta\, |u|^{N'}} && \text{in } \Omega\\[10pt]
u & = 0 && \text{on } \bdry{\Omega},
\end{aligned}\right.
\]
where $\Omega$ is a smooth bounded domain in $\R^N,\, N \ge 2$, $\Delta_N\, u = \divg \left(|\nabla u|^{N-2}\, \nabla u\right)$ is the $N$-Laplacian of $u$, $N' = N/(N - 1)$, and $\lambda, \beta > 0$. This problem is a natural analog of the Br\'{e}zis-Nirenberg problem for the $p$-Laplacian in the borderline case $p = N$, where the critical growth is of exponential type and is governed by the Trudinger-Moser inequality
\begin{equation} \label{2}
\sup_{u \in W^{1,N}_0(\Omega),\, \norm{u} \le 1} \int_\Omega e^{\alpha_N |u|^{N'}} dx < \infty.
\end{equation}
Here $W^{1,N}_0(\Omega)$ is the usual Sobolev space with the norm
\[
\norm{u} = \left(\int_\Omega |\nabla u|^N\, dx\right)^{1/N},
\]
$\alpha_N = N \omega_{N-1}^{1/(N-1)}$, and $\omega_{N-1}$ is the area of the unit sphere in $\R^N$ (see Trudinger \cite{MR0216286} and Moser \cite{MR0301504}). A result of Adimurthi \cite{MR1079983} gives a positive solution of this problem for $\lambda \in (0,\lambda_1)$, where $\lambda_1 > 0$ is the first Dirichlet eigenvalue of $- \Delta_N$ in $\Omega$ (see also do {\'O} \cite{MR1392090}). Theorem 1.4 in de Figueiredo et al.\! \cite{MR1386960,MR1399846} gives a nontrivial solution for $\lambda \ge \lambda_1$ in the semilinear case $N = 2$. More recently, Yang and Perera \cite{MR3616328} obtained a nontrivial solution in the general quasilinear case $N \ge 3$ when $\lambda > \lambda_1$ is not an eigenvalue.

In the present paper we study the related semipositone problem
\begin{equation} \label{1}
\left\{\begin{aligned}
- \Delta_N\, u & = \lambda u^{N-1} e^{\beta u^{N'}} - \mu && \text{in } \Omega\\[5pt]
u & > 0 && \text{in } \Omega\\[5pt]
u & = 0 && \text{on } \bdry{\Omega},
\end{aligned}\right.
\end{equation}
where $\mu > 0$. Since $- \mu < 0$, $u = 0$ is not a subsolution of this problem, which makes finding a positive solution rather difficult (see Lions \cite{MR678562}). This compounds the usual difficulties arising from the lack of compactness associated with critical growth problems. Our main result here is that this problem has a weak positive solution for all sufficiently small $\mu$ when $\lambda < \lambda_1$.

\begin{theorem} \label{Theorem 1}
If $\lambda \in (0,\lambda_1)$, then there exists $\mu^\ast > 0$ such that for all $\mu \in (0,\mu^\ast)$, problem \eqref{1} has a weak solution $u_\mu \in C^{1,\alpha}_0(\closure{\Omega})$ for some $\alpha \in (0,1)$.
\end{theorem}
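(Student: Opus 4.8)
The plan is to obtain $u_\mu$, for small $\mu$, as a mountain-pass solution of a problem close to \eqref{1}, and to deduce its positivity and its $C^{1,\alpha}$ regularity from uniform a priori estimates that tie \eqref{1} to the positive solution of Adimurthi's problem at $\mu=0$. Write $f(t)=\lambda\,(t^+)^{N-1}e^{\beta(t^+)^{N'}}$ for $t\in\R$, let $e\in C^{1,\alpha}_0(\closure{\Omega})$ be the positive solution of $-\Delta_N\, e=1$ in $\Omega$, $e=0$ on $\bdry{\Omega}$, and consider the problem $-\Delta_N\, u=f(u)-\mu$ in $\Omega$, $u=0$ on $\bdry{\Omega}$, whose positive solutions solve \eqref{1}. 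Because $\lambda<\lambda_1$ one has $\int_\Omega F(u^+)\,dx\le\frac{\lambda}{N\lambda_1}\norm{u}^N+\o(\norm{u}^N)$ near $0$ (by the Trudinger--Moser inequality, $F$ the primitive of $f$), so $\Phi_\mu(u)=\tfrac1N\norm{u}^N-\int_\Omega F(u^+)\,dx+\mu\int_\Omega u\,dx$ has a positive energy barrier $\inf_{\norm{u}=\rho}\Phi_\mu\ge c_0>0$ on a small sphere, uniformly for small $\mu$, while the super-exponential growth of $F$ yields a point beyond the sphere at lower energy. The barrier gives $c_\mu\ge c_0$ for the minimax level, an Adimurthi-type test-function estimate gives $c_\mu<\tfrac1N(\alpha_N/\beta)^{N-1}$, and since the Palais--Smale condition holds below that threshold one obtains a mountain-pass critical point $u_\mu$, a weak solution of the modified problem with $\Phi_\mu(u_\mu)=c_\mu$. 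The weak comparison principle, applied through $-\Delta_N\, u_\mu=f(u_\mu^+)-\mu\ge-\mu=-\Delta_N(-\mu^{1/(N-1)}e)$, gives $u_\mu\ge-\mu^{1/(N-1)}e$, and since $e^{\beta(u_\mu^+)^{N'}}\in L^p(\Omega)$ for every $p<\infty$, elliptic regularity for the $N$-Laplacian places $u_\mu$ in $C^{1,\beta}_0(\closure{\Omega})$ for some $\beta>0$.

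\emph{The uniform a priori estimate.} The crux is to show that there are $R>0$, $\mu^\ast>0$ and $\alpha=\alpha(N,\Omega)\in(0,1)$ with $\norm[C^{1,\alpha}_0(\closure{\Omega})]{u_\mu}\le R$ for all $\mu\in(0,\mu^\ast)$. I would argue by contradiction through a blow-up/compactness analysis. If the estimate fails there are $\mu_j\to0$ with $M_j:=\norm[\infty]{u_{\mu_j}}\to\infty$, since a uniform $L^\infty$ bound would bound $f(u_{\mu_j}^+)-\mu_j$ and hence, by the $N$-Laplacian regularity theory, the $C^{1,\alpha}$-norm for a fixed $\alpha$. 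Rescaling $u_{\mu_j}$ about a maximum point, with amplitude $M_j^{-1/(N-1)}$ and with the length scale dictated by the exponential nonlinearity, one extracts in the limit a nontrivial entire solution of a limiting equation, on $\R^N$ or --- if the maxima approach $\bdry{\Omega}$ --- on a half-space; the half-space case is ruled out by a Liouville-type nonexistence theorem, and in the whole-space case the limit profile carries a fixed, strictly positive amount of energy which, as $c_{\mu_j}<\tfrac1N(\alpha_N/\beta)^{N-1}$ stays below the Trudinger--Moser compactness threshold attached to \eqref{2}, cannot be accommodated --- a contradiction. Carrying this through rigorously --- noting that the negative part is small, since $\norm[\infty]{u_{\mu_j}^-}\le\mu_j^{1/(N-1)}\norm[\infty]{e}\to0$, so that the blow-up concerns the positive part; identifying the correct rescaling for the exponential nonlinearity; the boundary (half-space) exclusion; and the energy bookkeeping --- is the principal difficulty of the proof.

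\emph{Passage to the limit, positivity, and conclusion.} Granting the estimate, fix $\mu\in(0,\mu^\ast)$; then $u_\mu\in C^{1,\alpha}_0(\closure{\Omega})$ solves $-\Delta_N\, u=\lambda\,(u^+)^{N-1}e^{\beta(u^+)^{N'}}-\mu$ in $\Omega$ with $u=0$ on $\bdry{\Omega}$, so it remains only to show $u_\mu>0$ in $\Omega$ for small $\mu$. If this failed there would be $\mu_j\downarrow0$ with $u_{\mu_j}$ not positive in $\Omega$; by the uniform $C^{1,\alpha}$ bound and the Arzel\`{a}--Ascoli theorem, $u_{\mu_j}\to u^0$ in $C^1(\closure{\Omega})$ along a subsequence, where $u^0\ge0$ (from $u_{\mu_j}\ge-\mu_j^{1/(N-1)}e$), $u^0$ solves $-\Delta_N\, u^0=\lambda\,(u^0)^{N-1}e^{\beta(u^0)^{N'}}$ in $\Omega$ with $u^0=0$ on $\bdry{\Omega}$, and $u^0\not\equiv0$ because $\Phi_0(u^0)=\lim_j\Phi_{\mu_j}(u_{\mu_j})=\lim_j c_{\mu_j}\ge c_0>0=\Phi_0(0)$. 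Thus $u^0$ is a positive solution of the problem with $\mu=0$, so by the strong maximum principle and Hopf's boundary lemma for the $N$-Laplacian, $u^0>0$ in $\Omega$ and $u^0\ge c\,\mathrm{dist}(\cdot,\bdry{\Omega})$ in a neighbourhood of $\bdry{\Omega}$ for some $c>0$. Since $u_{\mu_j}\to u^0$ in $C^1(\closure{\Omega})$, this forces $u_{\mu_j}>0$ in $\Omega$ for all large $j$ --- in the interior because $u^0$ is bounded away from $0$ there, and in a boundary collar because the $C^1$-closeness to $u^0$ preserves the sign of the inward normal derivative --- contradicting the choice of $\mu_j$. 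Hence, after shrinking $\mu^\ast$, for every $\mu\in(0,\mu^\ast)$ the function $u_\mu$ is a positive weak solution of \eqref{1} belonging to $C^{1,\alpha}_0(\closure{\Omega})$, which is the assertion of the theorem.
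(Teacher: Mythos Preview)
Your overall architecture---mountain-pass solution of a sign-extended problem, uniform $C^{1,\alpha}$ control as $\mu\to 0$, $C^1$-convergence to a nontrivial positive solution of the $\mu=0$ problem, and positivity via the strong maximum principle/Hopf lemma---is exactly the paper's strategy. The genuine gap is in the step you flag yourself: the uniform $L^\infty$ (hence $C^{1,\alpha}$) bound. You only sketch a blow-up/Liouville scheme and explicitly defer ``carrying this through rigorously''; for Trudinger--Moser nonlinearities the correct rescaling, the identification of the limit equation for the $N$-Laplacian, the half-space exclusion, and the energy quantization are each substantial, and none of them is established in your proposal. As written, the proof is incomplete precisely at its crux.

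The paper avoids blow-up entirely. Its main technical contribution is a \emph{variational} compactness theorem (Theorem~\ref{Theorem 2}): if $\mu_j\to\mu\ge 0$ and $(u_j)$ is a Palais--Smale--type sequence for $E_{\mu_j}$ at a level $c\ne 0$ with $c<\tfrac1N(\alpha_N/\beta)^{N-1}-\tfrac{\mu}{2}|\Omega|$, then a subsequence converges \emph{strongly in $W^{1,N}_0(\Omega)$} to a critical point of $E_\mu$ at level $c$. Applying this with $\mu_j\to 0$ to the mountain-pass solutions $u_j=u_{\mu_j}$ gives $u_j\to u$ in $W^{1,N}_0(\Omega)$; an elementary lemma then shows that a \emph{convergent} sequence in $W^{1,N}_0(\Omega)$ satisfies $\sup_j\int_\Omega e^{b|u_j|^{N'}}dx<\infty$ for every $b>0$, so $f(u_j^+)$ is bounded in every $L^s$, whence uniform $L^\infty$ bounds (Guedda--V\'eron) and uniform $C^{1,\alpha}$ bounds (Lieberman) follow. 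This replaces the entire blow-up machinery by a single compactness statement that also yields the \PS{c} condition used to produce $u_\mu$ in the first place. A minor further difference: the paper truncates the perturbation via a bounded $g$ (so $G\ge-\tfrac12$), which makes boundedness of Palais--Smale sequences and the sphere estimate cleaner than with your linear term $\mu\int_\Omega u$, and makes the comparison $u_\mu\ge -\mu^{1/(N-1)}e$ unnecessary.
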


This result seems to be new even in the semilinear case $N = 2$. The outline of the proof is as follows. We consider the modified problem
\begin{equation} \label{3}
\left\{\begin{aligned}
- \Delta_N\, u & = \lambda f(u^+) - \mu\, g(u) && \text{in } \Omega\\[10pt]
u & = 0 && \text{on } \bdry{\Omega},
\end{aligned}\right.
\end{equation}
where $f(t) = t^{N-1} e^{\beta t^{N'}}$ for $t \ge 0$, $u^+(x) = \max \set{u(x),0}$, and
\[
g(t) = \begin{cases}
0, & t \le -1\\[5pt]
1 + t, & -1 < t < 0\\[5pt]
1, & t \ge 0.
\end{cases}
\]
Weak solutions of this problem coincide with critical points of the $C^1$-functional
\[
E_\mu(u) = \int_\Omega \left[\frac{|\nabla u|^N}{N} - \lambda F(u^+) + \mu\, G(u)\right] dx, \quad u \in W^{1,N}_0(\Omega),
\]
where
\[
F(t) = \int_0^t f(s)\, ds, \quad t \ge 0, \qquad G(t) = \int_0^t g(s)\, ds, \quad t \in \R.
\]
The functional $E_\mu$ satisfies the \PS{c} condition for all $c \ne 0$ satisfying
\[
c < \frac{1}{N} \left(\frac{\alpha_N}{\beta}\right)^{N-1} - \frac{\mu}{2} \vol{\Omega},
\]
where $\vol{\cdot}$ denotes the Lebesgue measure in $\R^N$, and it follows from the mountain pass theorem that $E_\mu$ has a uniformly positive critical level below this threshold for compactness for all sufficiently small $\mu > 0$ (see Lemmas \ref{Lemma 2} and \ref{Lemma 6}). This part of the proof is more or less standard. The novelty of the paper lies in the fact that the solution $u_\mu$ of the modified problem \eqref{3} thus obtained is positive, and hence solves our original problem \eqref{1}, if $\mu$ is further restricted. Note that this does not follow from standard arguments based on the maximum principle since the perturbation term $- \mu < 0$. This is precisely the main difficulty in finding positive solutions of semipositone problems as was pointed out in Lions \cite{MR678562}.

We will prove that for every sequence $\mu_j > 0,\, \mu_j \to 0$, a subsequence of $u_j = u_{\mu_j}$ is positive in $\Omega$. The idea of the proof is to show that a subsequence of $u_j$ converges in $C^1_0(\closure{\Omega})$ to a solution of the limit problem
\[
\left\{\begin{aligned}
- \Delta_N\, u & = \lambda f(u^+) && \text{in } \Omega\\[10pt]
u & = 0 && \text{on } \bdry{\Omega}.
\end{aligned}\right.
\]
This requires a uniform $C^{1,\alpha}_0(\closure{\Omega})$ estimate of $u_j$ for some $\alpha \in (0,1)$. It is well-known that each $u_j$ belongs to $C^{1,\alpha}_0(\closure{\Omega})$. However, proving that the sequence $\seq{u_j}$ remains bounded in $C^{1,\alpha}_0(\closure{\Omega})$ is a nontrivial task in the critical case. We will obtain the required estimate by proving the following compactness result, which is of independent interest.

\begin{theorem} \label{Theorem 2}
If $\mu_j > 0,\, \mu_j \to \mu \ge 0$, $\seq{u_j} \subset W^{1,N}_0(\Omega)$, and
\[
E_{\mu_j}(u_j) \to c, \qquad E_{\mu_j}'(u_j) \to 0
\]
for some $c \ne 0$ satisfying
\begin{equation} \label{4}
c < \frac{1}{N} \left(\frac{\alpha_N}{\beta}\right)^{N-1} - \frac{\mu}{2} \vol{\Omega},
\end{equation}
then a subsequence of $\seq{u_j}$ converges to a critical point of $E_\mu$ at the level $c$.
\end{theorem}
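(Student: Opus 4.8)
The plan is to run the standard concentration-compactness analysis for critical Trudinger--Moser problems, tracking the extra perturbation term $\mu_j\, g(u_j)$ throughout. First I would show that $\seq{u_j}$ is bounded in $W^{1,N}_0(\Omega)$. Writing $\norm{u_j}^N = N E_{\mu_j}(u_j) + \lambda \int_\Omega N F(u_j^+)\, dx - \mu_j \int_\Omega N G(u_j)\, dx$ and testing $E_{\mu_j}'(u_j)$ against $u_j$ gives $\norm{u_j}^N = \langle E_{\mu_j}'(u_j), u_j\rangle + \lambda \int_\Omega f(u_j^+)\, u_j\, dx - \mu_j \int_\Omega g(u_j)\, u_j\, dx$; combining these in the usual way (using that $N F(t) \le t f(t)$ for the superlinear nonlinearity, that $|G(t)| \le |t|$ and $|g(t)| \le 1$, and that $E_{\mu_j}'(u_j) \to 0$, $E_{\mu_j}(u_j) \to c$) yields $\seq{u_j}$ bounded, since the $\mu_j$-terms are controlled by $\mu_j\vol{\Omega}$ plus an $\eps\norm{u_j}$ term absorbed on the left. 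Hence, up to a subsequence, $u_j \rightharpoonup u$ in $W^{1,N}_0(\Omega)$, $u_j \to u$ in $L^q(\Omega)$ for all $q \ge 1$, and $u_j \to u$ a.e.

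Next I would pass to the limit in the equation. Since $g$ is bounded and continuous and $u_j \to u$ a.e., $g(u_j) \to g(u)$ in, say, $L^{N'}(\Omega)$ by dominated convergence, and $\mu_j \to \mu$, so $\mu_j\, g(u_j) \to \mu\, g(u)$ strongly. The crux is the exponential term: one shows that if $\norm{u}$ is strictly below the relevant threshold then $f(u_j^+) \to f(u^+)$ in $L^1(\Omega)$ and $u$ is a critical point of $E_\mu$. The standard route (as in Adimurthi, do {\'O}, or Yang--Perera) is: (i) use the Moser--Trudinger inequality \eqref{2} together with the $L^1$-convergence $f(u_j^+) u_j \to f(u^+) u$ to deduce that $u$ solves $-\Delta_N u = \lambda f(u^+) - \mu g(u)$ weakly, i.e. $E_\mu'(u) = 0$; (ii) set $v_j = u_j - u$ so that $\norm{v_j}^N = \norm{u_j}^N - \norm{u}^N + \o(1)$ by Brezis--Lieb, and derive the energy identity $\frac{1}{N}\norm{v_j}^N + E_\mu(u) \to c$, which (using $E_\mu'(u)=0$ and $N F \le tf$, $G\ge 0$ on the relevant set, so that $E_\mu(u)$ has the expected sign properties, in particular $E_\mu(u) \ge -\frac{\mu}{2}\vol{\Omega}$ when $u$ is a nonnegative critical point — more precisely $E_\mu(u) = E_\mu(u) - \frac{1}{N}\langle E_\mu'(u),u\rangle \ge -\frac{\mu}{N}\int_\Omega g(u)u\,dx + \mu\int_\Omega G(u)\,dx \ge -\tfrac{\mu}{2}\vol\Omega$, using $0\le G(t)\le \tfrac12$ for $t\le 0$, $G(t)=t$ for $t\ge 0$ and a direct estimate) forces $\limsup \norm{v_j}^N \le N\big(c - E_\mu(u)\big) < \big(\alpha_N/\beta\big)^{N-1}$ when $c$ satisfies \eqref{4}.

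With $\limsup_j \norm{v_j}^N < (\alpha_N/\beta)^{N-1}$ in hand, I would upgrade $f(u_j^+) \to f(u^+)$ to convergence strong enough to conclude $v_j \to 0$. Concretely: choose $q>1$ and a splitting $\norm{u_j}^{N'} \le (1+\eps)\norm{v_j}^{N'} + C_\eps\norm{u}^{N'}$, so that $\beta\, q\, \norm{u_j}^{N'}$ stays below $\alpha_N$ for suitable $\eps$ and $q$ close to $1$; then the Moser--Trudinger inequality shows $e^{\beta q u_j^{N'}}$ is bounded in $L^1$, hence $f(u_j^+)$ is bounded in $L^{q'}$ for some $q'>1$ with $q'$-equiintegrability, giving $f(u_j^+) \to f(u^+)$ in $L^1$ and in fact $\int_\Omega f(u_j^+)(u_j - u)\,dx \to 0$. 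Feeding this and $\mu_j g(u_j)\to \mu g(u)$ in $L^{N'}$ into $\langle E_{\mu_j}'(u_j), u_j - u\rangle \to 0$ yields $\int_\Omega |\nabla u_j|^{N-2}\nabla u_j \cdot \nabla(u_j-u)\,dx \to 0$, and the $(S_+)$ property of $-\Delta_N$ on $W^{1,N}_0(\Omega)$ then gives $u_j \to u$ strongly. Since $c \ne 0$, $u$ is a nontrivial critical point at level $c$. \emph{The main obstacle} is the second paragraph: getting the sharp energy bound $\limsup\norm{v_j}^N < (\alpha_N/\beta)^{N-1}$ requires carefully absorbing the perturbation — one must verify that the lower bound $E_\mu(u) \ge -\tfrac{\mu}{2}\vol\Omega$ (which is exactly what makes the threshold in \eqref{4} work, matching the \PS{c} threshold in the excerpt) holds, and this in turn needs the sign/size information on $G$ and $g$ together with $E_\mu'(u) = 0$; the subsequent subcritical-exponent argument is then routine Moser--Trudinger bookkeeping.
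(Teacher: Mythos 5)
Your overall architecture is viable and genuinely diverges from the paper's at the key step: where you split $u_j = u + v_j$ and rely on a Brezis--Lieb identity plus the pointwise inequality $|u_j|^{N'} \le (1+\eps)|v_j|^{N'} + C_\eps |u|^{N'}$, the paper instead normalizes, $\widetilde{u}_j = u_j/\norm{u_j}$, shows $\norm{u_j}^N(1-\norm{\widetilde{u}}^N) \to N(c+\kappa)-\norm{u}^N < (\alpha_N/\beta)^{N-1}$, and invokes Lions' concentration--compactness lemma (Lemma \ref{Lemma 5}). As written, however, your argument has genuine gaps. The most serious is the identity $\norm{v_j}^N = \norm{u_j}^N - \norm{u}^N + \o(1)$: for the quasilinear case $N \ge 3$ this is a Brezis--Lieb statement for $\int_\Omega |\nabla u_j|^N\, dx$ and requires a.e.\ convergence of the gradients $\nabla u_j \to \nabla u$, which does \emph{not} follow from weak convergence in $W^{1,N}_0(\Omega)$; you would need a separate argument (e.g.\ of Boccardo--Murat type, using that $-\Delta_N u_j$ is bounded in $L^1(\Omega)+W^{-1,N'}(\Omega)$). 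The paper's normalization/Lions route needs only a.e.\ convergence of the functions $u_j$ and sidesteps this entirely (which is also why the paper must rule out $u=0$ separately, Lions' lemma requiring a nonzero a.e.\ limit, whereas in your scheme $u=0$ would simply force $c=0$). A second, logical, gap: you deduce $E_\mu'(u)=0$ from ``the $L^1$-convergence $f(u_j^+)u_j \to f(u^+)u$'', but that convergence is precisely the hard assertion that only becomes available \emph{after} the threshold bound on $\limsup\norm{v_j}^N$, which in your scheme itself uses $E_\mu'(u)=0$ to get $E_\mu(u)\ge -\tfrac{\mu}{2}\vol{\Omega}$. The circle is easy to break --- pass to the limit against fixed $v\in C^\infty_0(\Omega)$ using the uniform bound on $\int_\Omega u_j^+ f(u_j^+)\,dx$ and the truncation estimate $\abs{\int_{\{u_j^+\ge M\}} f(u_j^+)\,v\,dx}\le M^{-1}\sup|v|\int_\Omega u_j^+f(u_j^+)\,dx$, exactly as the paper does --- but the order of deductions you give is circular.

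There is also a gap in the boundedness step: ``using that $NF(t)\le tf(t)$'' is the borderline inequality and does not yield boundedness, since in $N E_{\mu_j}(u_j) - E_{\mu_j}'(u_j)\,u_j$ the terms $\norm{u_j}^N$ cancel and $NF\le tf$ only says the surviving integrand is nonnegative, so you obtain no control of $\int_\Omega u_j^+ f(u_j^+)\,dx$ and hence none of $\norm{u_j}$. One needs a strict gap, $F(t)\le C + \theta^{-1}\,t f(t)$ for some $\theta > N$, which is exactly the paper's Lemma \ref{Lemma 1} \ref{Lemma 1 (ii)} (with $\theta = N(N+\beta-1)/(N-1)$); with it, first $\int_\Omega u_j^+f(u_j^+)\,dx$ and then $\norm{u_j}^N$ are bounded as you intend. (A minor slip: for $t\le 0$ one has $-\tfrac12\le G(t)\le 0$, not $0\le G(t)\le\tfrac12$; but the pointwise bound you actually need, $t g(t)\le N\bigl(G(t)+\tfrac12\bigr)$, is correct, so your estimate $E_\mu(u)\ge -\tfrac{\mu}{2}\vol{\Omega}$ and its matching with the threshold \eqref{4} are fine.) With these repairs --- justified a.e.\ gradient convergence or a switch to the paper's normalized-sequence argument, the corrected order of limits for $E_\mu'(u)=0$, and the strengthened Ambrosetti--Rabinowitz-type inequality --- your Brezis--Lieb route and the final $(S_+)$ step do give the theorem.
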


This theorem implies that
\[
\sup_j \int_\Omega e^{b\, |u_j|^{N'}} dx < \infty
\]
for all $b$ (see Lemma \ref{Lemma 8}). This together with the H\"{o}lder inequality implies that $f(u_j^+)$ is bounded in $L^s(\Omega)$ for all $s > 1$, so $\seq{u_j}$ is bounded in $L^\infty(\Omega)$ by Guedda and V{\'e}ron \cite[Proposition 1.3]{MR1009077}. The global regularity result in Lieberman \cite{MR969499} then gives the desired $C^{1,\alpha}_0(\closure{\Omega})$ estimate.

Theorem \ref{Theorem 2} is proved in Section \ref{Section 2} and Theorem \ref{Theorem 1} in Section \ref{Section 3}. In closing the introduction we remark that we have confined ourselves to the model problem \eqref{1} only for the sake of simplicity. The arguments given in this paper can easily be adapted to obtain positive solutions of more general semipositone problems with critical Trudinger-Moser nonlinearities.

\section{Proof of Theorem \ref{Theorem 2}} \label{Section 2}

In this section we prove Theorem \ref{Theorem 2}. First we collect some elementary estimates for easy reference.

\begin{lemma} \label{Lemma 1}
For all $t \ge 0$,
\begin{enumroman}
\item \label{Lemma 1 (i)} $F(t) \le \ds{\frac{N - 1}{\beta N}\, \frac{tf(t)}{t^{N/(N-1)}}}$,
\item \label{Lemma 1 (ii)} $F(t) \le F(1) + \dfrac{N - 1}{N(N + \beta - 1)}\, tf(t)$,
\item \label{Lemma 1 (iii)} $F(t) \le \dfrac{1}{N}\, tf(t)$,
\item \label{Lemma 1 (iv)} $F(t) \le \dfrac{1}{N}\, t^N + \dfrac{\beta}{N}\, t^{N^2/(N-1)} e^{\beta t^{N'}}$,
\item \label{Lemma 1 (v)} $F(t) \ge \ds{\frac{1}{N}\, t^N + \frac{\beta (N - 1)}{N^2}\, t^{N^2/(N-1)}}$.
\end{enumroman}
\end{lemma}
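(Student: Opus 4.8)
The plan is to read off all five bounds directly from $F(t) = \int_0^t f(s)\, ds$ with $f(s) = s^{N-1} e^{\beta s^{N'}}$, using only the monotonicity of $s \mapsto e^{\beta s^{N'}}$ on $[0,t]$, the scalar inequalities $1 + x \le e^x \le 1 + x e^x$ for $x \ge 0$, and the arithmetic identities $N' - 1 = 1/(N-1)$ and $N + N' = N^2/(N-1)$, which in particular give $t^N\, t^{N'} = t^{N^2/(N-1)}$. Parts \ref{Lemma 1 (iii)}, \ref{Lemma 1 (iv)}, \ref{Lemma 1 (v)} follow by inserting the appropriate scalar bound under the integral sign and integrating: for \ref{Lemma 1 (iii)}, pulling $e^{\beta s^{N'}} \le e^{\beta t^{N'}}$ out of the integral gives $F(t) \le e^{\beta t^{N'}} \int_0^t s^{N-1}\, ds = \frac{1}{N}\, t^N e^{\beta t^{N'}} = \frac{1}{N}\, tf(t)$; for \ref{Lemma 1 (v)}, inserting $e^{\beta s^{N'}} \ge 1 + \beta s^{N'}$ and integrating term by term, with $\int_0^t s^{N-1+N'}\, ds = t^{N+N'}/(N+N') = \frac{N-1}{N^2}\, t^{N^2/(N-1)}$; for \ref{Lemma 1 (iv)}, inserting instead $e^{\beta s^{N'}} \le 1 + \beta s^{N'} e^{\beta s^{N'}}$, integrating the constant-order term exactly, and bounding the second term by once more pulling out $e^{\beta t^{N'}}$, the coefficient $\beta/(N+N')$ being at most $\beta/N$ since $N + N' \ge N$.

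For \ref{Lemma 1 (i)} and \ref{Lemma 1 (ii)} I would instead exhibit a primitive-type majorant, namely a power $s^a e^{\beta s^{N'}}$ whose derivative dominates a constant multiple of $f$. Differentiating gives $\frac{d}{ds}\big[s^a e^{\beta s^{N'}}\big] = \big(a s^{a-1} + \beta N' s^{a+N'-1}\big)\, e^{\beta s^{N'}}$. Taking $a = N - N' \ge 0$ (this is $\ge 0$ for every $N \ge 2$, equal to $0$ at $N = 2$) makes the second summand exactly $\beta N'\, f(s)$, so $\frac{d}{ds}\big[s^{N-N'} e^{\beta s^{N'}}\big] \ge \beta N'\, f(s)$; integrating over $[0,t]$ and using $1/N' = (N-1)/N$ together with $t^{N-N'} e^{\beta t^{N'}} = tf(t)/t^{N/(N-1)}$ yields \ref{Lemma 1 (i)}.

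For \ref{Lemma 1 (ii)} the estimate is immediate when $t \le 1$, since $F$ is increasing, so $F(t) \le F(1)$, and the remaining term is nonnegative. For $t \ge 1$ I would take $a = N$ and use $s^{N+N'-1} = s^{N-1} s^{N'} \ge s^{N-1}$ on $[1,\infty)$ to get $\frac{d}{ds}\big[s^N e^{\beta s^{N'}}\big] \ge (N + \beta N')\, f(s)$; integrating over $[1,t]$ and discarding the positive boundary contribution at $s = 1$ gives $F(t) - F(1) = \int_1^t f(s)\, ds \le \frac{1}{N + \beta N'}\, t^N e^{\beta t^{N'}} = \frac{1}{N + \beta N'}\, tf(t)$, and $N + \beta N' = N(N+\beta-1)/(N-1)$ produces the stated constant.

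I do not expect any genuine obstacle here: every step is a one-line integral estimate. The only points requiring care are the repeated exponent bookkeeping via $N' = N/(N-1)$ and verifying that the chosen exponents $N - N'$ and $N - 1 + N'$ are nonnegative so that the monotonicity and the discarding of boundary terms are legitimate for all $N \ge 2$, including the degenerate endpoint $N = 2$.
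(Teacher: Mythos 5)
Your proof is correct and takes essentially the same route as the paper: parts (iv) and (v) rest on the identical scalar inequalities $1 + x \le e^x \le 1 + x e^x$, and your derivative-of-a-majorant estimates for (i) and (ii) (together with the monotonicity bound for (iii)) are just the paper's integration-by-parts computations rearranged, producing the same constants. Your attention to the sign of the discarded boundary terms and to the endpoint case $N = 2$ (where $N - N' = 0$) is exactly the care the argument needs.
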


\begin{proof}
\ref{Lemma 1 (i)}. Integrating by parts,
\begin{align*}
F(t) & \le \frac{N - 1}{\beta N}\, t^{N-N/(N-1)} e^{\beta t^{N'}} - \frac{N - 2}{\beta} \int_0^t s^{N-N/(N-1)-1} e^{\beta s^{N'}}\, ds\\[3pt]
& \le \frac{N - 1}{\beta N}\, \frac{t^N e^{\beta t^{N'}}}{t^{N/(N-1)}}\\[3pt]
& = \frac{N - 1}{\beta N}\, \frac{tf(t)}{t^{N/(N-1)}}.
\end{align*}

\ref{Lemma 1 (ii)}. For $t \le 1$, $F(t) \le F(1)$. For $t > 1$, $F(t) = F(1) + \dint_1^t f(s)\, ds$. Integrating by parts,
\begin{align*}
\int_1^t f(s)\, ds & \le \frac{1}{N}\, t^N e^{\beta t^{N'}} - \frac{\beta}{N - 1} \int_1^t s^{N-1+N/(N-1)} e^{\beta s^{N'}}\, ds\\[3pt]
& \le \frac{1}{N}\, tf(t) - \frac{\beta}{N - 1} \int_1^t f(s)\, ds,
\end{align*}
and hence $\ds{\int_1^t f(s)\, ds \le \frac{N - 1}{N(N + \beta - 1)}\, tf(t)}$.

\ref{Lemma 1 (iii)}. Integrating by parts,
\begin{align*}
F(t) & = \frac{1}{N}\, t^N e^{\beta t^{N'}} - \frac{\beta}{N - 1} \int_0^t s^{N+N/(N-1)-1} e^{\beta s^{N'}}\, ds\\[3pt]
& \le \frac{1}{N}\, tf(t).
\end{align*}

\ref{Lemma 1 (iv)}. Since $e^t \le 1 + te^t$ for all $t \ge 0$,
\begin{align*}
F(t) & \le \int_0^t s^{N-1} \left(1 + \beta s^{N/(N-1)} e^{\beta s^{N'}}\right) ds\\[3pt]
& \le \frac{1}{N}\, t^N \left(1 + \beta t^{N/(N-1)} e^{\beta t^{N'}}\right).
\end{align*}

\ref{Lemma 1 (v)}. Since $e^t \ge 1 + t$ for all $t \ge 0$,
\begin{align*}
F(t) & \ge \int_0^t s^{N-1} \left(1 + \beta s^{N/(N-1)}\right) ds\\[3pt]
& = \frac{1}{N}\, t^N + \frac{\beta (N - 1)}{N^2}\, t^{N^2/(N-1)}. \QED
\end{align*}
\end{proof}

Next we prove the following lemma.

\begin{lemma} \label{Lemma 4}
If $\seq{u_j}$ is a sequence in $W^{1,N}_0(\Omega)$ converging a.e.\! to $u \in W^{1,N}_0(\Omega)$ and
\begin{equation} \label{5}
\sup_j \int_\Omega u_j^+ f(u_j^+)\, dx < \infty,
\end{equation}
then
\[
\int_\Omega F(u_j^+)\, dx \to \int_\Omega F(u^+)\, dx.
\]
\end{lemma}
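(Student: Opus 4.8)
The plan is to split $\Omega$ into a region where $u_j^+$ is bounded and a region where it is large, controlling the latter via the uniform bound \eqref{5}. First I would observe that by \ref{Lemma 1 (iii)} we have $0 \le F(t) \le tf(t)/N$ for all $t \ge 0$, so setting $M = \sup_j \int_\Omega u_j^+ f(u_j^+)\, dx < \infty$ we get the uniform bound $\int_\Omega F(u_j^+)\, dx \le M/N$ for all $j$. The a.e.\ convergence $u_j \to u$ gives $u_j^+ \to u^+$ a.e., and since $F$ is continuous and nonnegative, $F(u_j^+) \to F(u^+)$ a.e.; by Fatou's lemma $\int_\Omega F(u^+)\, dx \le M/N < \infty$ as well. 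The goal is then to upgrade a.e.\ convergence of the integrands to convergence of the integrals, and the natural tool is the Vitali convergence theorem, so the real task is to prove that the family $\set{F(u_j^+)}_j$ is uniformly integrable.

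To prove uniform integrability, fix a measurable set $A \subseteq \Omega$ and a threshold $t_0 > 0$ to be chosen, and write $\int_A F(u_j^+)\, dx = \int_{A \cap \set{u_j^+ \le t_0}} F(u_j^+)\, dx + \int_{A \cap \set{u_j^+ > t_0}} F(u_j^+)\, dx$. On the first piece $F(u_j^+) \le F(t_0)$ since $F$ is nondecreasing, so that piece is at most $F(t_0)\, \vol{A}$, which is small if $\vol{A}$ is small. On the second piece I would use \ref{Lemma 1 (iii)} again together with the observation that on $\set{u_j^+ > t_0}$ we can extract a smallness factor: more precisely, from \ref{Lemma 1 (i)}, $F(t) \le \frac{N-1}{\beta N}\, t^{-N/(N-1)}\, tf(t) \le \frac{N-1}{\beta N}\, t_0^{-N/(N-1)}\, tf(t)$ when $t > t_0$, so
\[
\int_{A \cap \set{u_j^+ > t_0}} F(u_j^+)\, dx \le \frac{N-1}{\beta N}\, t_0^{-N/(N-1)} \int_\Omega u_j^+ f(u_j^+)\, dx \le \frac{(N-1)\, M}{\beta N}\, t_0^{-N/(N-1)}.
\]
Given $\eps > 0$, first choose $t_0$ large enough that the second term is below $\eps/2$ uniformly in $j$; then, with $t_0$ now fixed, choose $\delta > 0$ so that $F(t_0)\, \delta < \eps/2$. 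Then $\vol{A} < \delta$ forces $\int_A F(u_j^+)\, dx < \eps$ for every $j$, which is exactly uniform (absolute) integrability over the finite-measure space $\Omega$. Combined with the a.e.\ convergence $F(u_j^+) \to F(u^+)$, the Vitali convergence theorem yields $\int_\Omega F(u_j^+)\, dx \to \int_\Omega F(u^+)\, dx$.

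The only slightly delicate point — and the step I expect to be the main obstacle — is establishing uniform integrability rather than just uniform boundedness of the integrals, i.e.\ ruling out concentration of $F(u_j^+)$ on small sets; but this is precisely what the estimate \ref{Lemma 1 (i)} buys us, since it converts the assumed control of $u_j^+ f(u_j^+)$ into control of $F(u_j^+)$ with a gain of the negative power $t^{-N/(N-1)}$ that becomes arbitrarily small on the high-value set. (An equivalent route avoiding an explicit appeal to Vitali's theorem is to apply Fatou's lemma to $\int_\Omega [F(t_0) \wedge (u_j^+ f(u_j^+)/N) - |F(u_j^+) - \min\{F(u_j^+),F(t_0)\}|]\,dx$ type quantities, or simply to use the generalized dominated convergence theorem with the a.e.-convergent dominating sequence coming from the truncation split above; I would use whichever is cleanest in the write-up.)
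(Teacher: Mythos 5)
Your proof is correct and is essentially the paper's argument: the same splitting of $\Omega$ at a level $t_0$ (the paper's $M$) and the same use of Lemma \ref{Lemma 1} \ref{Lemma 1 (i)} together with \eqref{5} to make the contribution of $\{u_j^+ \ge t_0\}$ small uniformly in $j$. The only difference is packaging — you phrase the conclusion as uniform integrability plus the Vitali convergence theorem, whereas the paper takes the double limit $j \to \infty$ then $M \to \infty$ directly — so no further comment is needed.
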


\begin{proof}
For $M > 0$, write
\[
\int_\Omega F(u_j^+)\, dx = \int_{\{u_j^+ < M\}} F(u_j^+)\, dx + \int_{\{u_j^+ \ge M\}} F(u_j^+)\, dx.
\]
By Lemma \ref{Lemma 1} \ref{Lemma 1 (i)} and \eqref{5},
\[
\int_{\{u_j^+ \ge M\}} F(u_j^+)\, dx \le \frac{N - 1}{\beta NM^{N/(N-1)}} \int_\Omega u_j^+ f(u_j^+)\, dx = \O\! \left(\frac{1}{M^{N/(N-1)}}\right) \text{ as } M \to \infty.
\]
Hence
\[
\int_\Omega F(u_j^+)\, dx = \int_{\{u_j^+ < M\}} F(u_j^+)\, dx + \O\! \left(\frac{1}{M^{N/(N-1)}}\right),
\]
and the conclusion follows by first letting $j \to \infty$ and then letting $M \to \infty$.
\end{proof}

We will also need the following result of Lions \cite{MR834360} (see {\em Remark} I.18 $(i)$).

\begin{lemma} \label{Lemma 5}
If $\seq{u_j}$ is a sequence in $W^{1,N}_0(\Omega)$ with $\norm{u_j} = 1$ for all $j$ and converging a.e.\! to a nonzero function $u \in W^{1,N}_0(\Omega)$, then
\[
\sup_j \int_\Omega e^{b\, |u_j|^{N'}}\, dx < \infty
\]
for all $b < \alpha_N/(1 - \norm{u}^N)^{1/(N-1)}$.
\end{lemma}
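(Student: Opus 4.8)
This is Lions's concentration-compactness refinement of the Moser-Trudinger inequality \eqref{2} (for a general weakly convergent sequence, not the special one produced by a PDE), so I would reconstruct his argument. The plan is to argue by contradiction, which permits passing to subsequences freely: if the conclusion fails, then along a subsequence $\int_\Omega e^{b\,|u_j|^{N'}}\,dx \to \infty$ for some fixed $b < \alpha_N/(1-\norm{u}^N)^{1/(N-1)}$. Since $\norm{u_j}=1$, after a further subsequence $u_j \rightharpoonup u$ in $W^{1,N}_0(\Omega)$ (the weak limit is $u$ because of the a.e.\ convergence), $u_j \to u$ in $L^p(\Omega)$ for every $p<\infty$, and $|\nabla u_j|^N\,dx \wstar \nu$ as measures on $\closure{\Omega}$ with $\nu(\closure{\Omega})=1$. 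Testing against nonnegative $\psi \in C(\closure{\Omega})$ and using the weak lower semicontinuity of $v \mapsto \int_\Omega \psi\,|\nabla v|^N\,dx$ gives $\nu \ge |\nabla u|^N\,dx$, and by the first concentration-compactness principle of Lions the excess $\sigma := \nu - |\nabla u|^N\,dx$ is a countable sum of atoms $\sum_i \nu_i\delta_{x_i}$ with $\sum_i \nu_i = 1 - \norm{u}^N =: \gamma_0$; in particular every atom has mass $\nu_i \le \gamma_0$.

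The core is then a sharp localized application of \eqref{2} around each point of $\closure{\Omega}$. Given $x_0 \in \closure{\Omega}$ and $\eps>0$, absolute continuity of $\int|\nabla u|^N$ together with the fact that only finitely many $\nu_i$ exceed $\eps$ lets one choose $r>0$ and a cutoff $\varphi \in C_c^\infty(B_{2r}(x_0))$, $0\le\varphi\le1$, $\varphi\equiv1$ on $B_r(x_0)$, such that $\int_{B_{2r}(x_0)}|\nabla u|^N\,dx < \eps$ and $\sigma(\closure{B_{2r}(x_0)}) \le \gamma_0 + \eps$. Applying \eqref{2} to $\varphi(u_j-u)$ rather than to $u_j$ is what makes this work: the cross term from $\nabla\varphi\,(u_j-u)$ tends to $0$ because $u_j\to u$ in $L^N(\Omega)$, so $\limsup_j \norm{\varphi(u_j-u)}^N \le \limsup_j \int_\Omega \varphi^N|\nabla(u_j-u)|^N\,dx$, which in turn is bounded, via $\nu\ge|\nabla u|^N\,dx$ and the choice of $r$, by $\gamma_0 + \o_\eps(1)$. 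Since $b\,\gamma_0^{1/(N-1)} < \alpha_N$ strictly, for $\eps$ small and $j$ large one has $\norm{\varphi(u_j-u)}^N < (\alpha_N/b)^{N-1}$, whence \eqref{2} yields $\sup_j \int_{B_r(x_0)} e^{b\,|u_j-u|^{N'}}\,dx < \infty$. One then reinstates $u$ by $|u_j|^{N'} \le (1+\delta)\,|u_j-u|^{N'} + C_\delta\,|u|^{N'}$, Hölder's inequality, and the separate elementary fact that $\int_\Omega e^{\gamma|u|^{N'}}\,dx < \infty$ for every $\gamma>0$ (proved by splitting $u$ into a bounded part and a part of arbitrarily small norm in $W^{1,N}_0(\Omega)$ and applying \eqref{2} to the latter). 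Covering the compact set $\closure{\Omega}$ by finitely many such balls gives $\sup_j \int_\Omega e^{b\,|u_j|^{N'}}\,dx < \infty$, contradicting the choice of the subsequence.

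The main obstacle is the sharpness: the threshold $\alpha_N/(1-\norm{u}^N)^{1/(N-1)}$ leaves no room, so the losses from the Young-type splitting $(1+\delta)$, from the Dirichlet energy of the cutoff $\varphi$, and from passing back and forth between $\nu$ and the functions must all be shown to be arbitrarily small. Everything is controlled by the energy accounting $\limsup_j \norm{u_j - u}^N \le 1 - \norm{u}^N$, i.e.\ the statement that the non-compact Dirichlet energy of the sequence is at most $\gamma_0$; for $N=2$ this is immediate from weak convergence in the Hilbert space $W^{1,2}_0(\Omega)$ (so Lemma \ref{Lemma 5} is transparent in that case), whereas for $N\ge3$ it is not a consequence of the a.e.\ convergence of $(u_j)$ alone and is precisely where Lions's analysis of the defect measure $\sigma$ --- the atomic structure of $\nu$ and the bound $\nu_i\le\gamma_0$ --- does the work. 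As the result is standard, in the paper I would simply cite Lions \cite{MR834360}, as is done here.
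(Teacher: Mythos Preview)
The paper gives no proof of this lemma; it is simply quoted as a result of Lions \cite{MR834360} (Remark I.18\,(i)), exactly as you acknowledge in your closing sentence. So there is nothing in the paper to compare your argument against, and your proposal to ``simply cite Lions'' matches the paper verbatim.

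Your reconstruction of Lions's argument is sound in outline, but one step is superfluous and its justification is shaky: you invoke the atomic decomposition of the gradient defect $\sigma = \nu - |\nabla u|^N\,dx$ to obtain $\sigma(\closure{B_{2r}(x_0)}) \le \gamma_0 + \eps$, yet $\sigma(\closure{\Omega}) = \gamma_0$ already gives $\sigma(\closure{B_{2r}(x_0)}) \le \gamma_0$ for free, with no structural information on $\sigma$ required. (In the borderline case the purely atomic structure pertains to the defect of the exponential integrand, not of $|\nabla u_j|^N$.) The genuine work is the local energy bound $\limsup_j \norm{\varphi(u_j-u)}^N \le \gamma_0 + \o_\eps(1)$, which follows from the elementary convexity inequality $|a-b|^N \le (1+\delta)|a|^N + C_\delta|b|^N$ together with $\nu \ge |\nabla u|^N\,dx$ and the smallness of $\int_{B_{2r}}|\nabla u|^N$; this is what you indicate, and the strict inequality $b\gamma_0^{1/(N-1)} < \alpha_N$ absorbs the $(1+\delta)$ loss. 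With that simplification your sketch is a correct proof.
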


We are now ready to prove Theorem \ref{Theorem 2}.

\begin{proof}[Proof of Theorem \ref{Theorem 2}]
We have
\begin{equation} \label{6}
E_{\mu_j}(u_j) = \frac{1}{N} \norm{u_j}^N - \lambda \int_\Omega F(u_j^+)\, dx + \mu_j \int_\Omega G(u_j)\, dx = c + \o(1)
\end{equation}
and
\begin{equation} \label{7}
E_{\mu_j}'(u_j)\, u_j = \norm{u_j}^N - \lambda \int_\Omega u_j^+ f(u_j^+)\, dx + \mu_j \int_\Omega u_j\, g(u_j)\, dx = \o(\norm{u_j}).
\end{equation}
Since
\[
\int_\Omega F(u_j^+)\, dx \le F(1) \vol{\Omega} + \frac{N - 1}{N(N + \beta - 1)} \int_\Omega u_j^+ f(u_j^+)\, dx
\]
by Lemma \ref{Lemma 1} \ref{Lemma 1 (ii)}, $\seq{\mu_j}$ is bounded, and
\begin{equation} \label{8}
\abs{\int_\Omega u_j\, g(u_j)\, dx} \le \int_\Omega |u_j|\, dx, \qquad \abs{\int_\Omega G(u_j)\, dx} \le \int_\Omega |u_j|\, dx,
\end{equation}
it follows from \eqref{6} and \eqref{7} that $\seq{u_j}$ is bounded in $W^{1,N}_0(\Omega)$. Hence a renamed subsequence converges to some $u$ weakly in $W^{1,N}_0(\Omega)$, strongly in $L^p(\Omega)$ for all $p \in [1,\infty)$, and a.e.\! in $\Omega$. Moreover,
\begin{equation} \label{9}
\sup_j \int_\Omega u_j^+ f(u_j^+)\, dx < \infty
\end{equation}
by \eqref{7} and \eqref{8}, and hence
\begin{equation} \label{10}
\int_\Omega F(u_j^+)\, dx \to \int_\Omega F(u^+)\, dx
\end{equation}
by Lemma \ref{Lemma 4}. Clearly,
\begin{equation} \label{11}
\mu_j \int_\Omega u_j\, g(u_j)\, dx \to \mu \int_\Omega u\, g(u)\, dx, \qquad \mu_j \int_\Omega G(u_j)\, dx \to \mu \int_\Omega G(u)\, dx.
\end{equation}

We claim that the weak limit $u$ is nonzero. Suppose $u = 0$. Then
\begin{equation} \label{12}
\int_\Omega F(u_j^+)\, dx \to 0, \qquad \mu_j \int_\Omega u_j\, g(u_j)\, dx \to 0, \qquad \mu_j \int_\Omega G(u_j)\, dx \to 0
\end{equation}
by \eqref{10} and \eqref{11}, and hence $c > 0$ and
\[
\norm{u_j} \to (Nc)^{1/N}
\]
by \eqref{6}. Let $(Nc)^{1/(N-1)} < \gamma < \alpha_N/\beta$. Then $\norm{u_j} \le \gamma^{(N-1)/N}$ for all $j \ge j_0$ for some $j_0$. Let $q = \alpha_N/\beta \gamma > 1$. By the H\"{o}lder inequality,
\[
\int_\Omega u_j^+ f(u_j^+)\, dx \le \left(\int_\Omega |u_j|^{Np}\, dx\right)^{1/p} \left(\int_\Omega e^{q \beta\, |u_j|^{N'}}\, dx\right)^{1/q},
\]
where $1/p + 1/q = 1$. The first integral on the right-hand side converges to zero since $u = 0$, while the second integral is bounded for $j \ge j_0$ since $q \beta\, |u_j|^{N'} = \alpha_N\, |\widetilde{u}_j|^{N'}$ with $\widetilde{u}_j = u_j/\gamma^{(N-1)/N}$ satisfying $\norm{\widetilde{u}_j} \le 1$, so
\[
\int_\Omega u_j^+ f(u_j^+)\, dx \to 0.
\]
Then $u_j \to 0$ by \eqref{7} and \eqref{12}, and hence $c = 0$ by \eqref{6} and \eqref{12}, a contradiction. So $u$ is nonzero.

Since $E_{\mu_j}'(u_j) \to 0$,
\[
\int_\Omega |\nabla u_j|^{N-2}\, \nabla u_j \cdot \nabla v\, dx - \lambda \int_\Omega f(u_j^+)\, v\, dx + \mu_j \int_\Omega g(u_j)\, v\, dx \to 0
\]
for all $v \in W^{1,N}_0(\Omega)$. For $v \in C^\infty_0(\Omega)$, an argument similar to that in the proof of Lemma \ref{Lemma 4} using the estimate
\[
\abs{\int_{\{u_j^+ \ge M\}} f(u_j^+)\, v\, dx} \le \frac{\sup |v|}{M} \int_\Omega u_j^+ f(u_j^+)\, dx
\]
and \eqref{9} shows that $\dint_\Omega f(u_j^+)\, v\, dx \to \dint_\Omega f(u^+)\, v\, dx$, and $\mu_j \dint_\Omega g(u_j)\, v\, dx \to \mu \dint_\Omega g(u)\, v\, dx$ since $g$ is bounded, so
\[
\int_\Omega |\nabla u|^{N-2}\, \nabla u \cdot \nabla v\, dx = \lambda \int_\Omega f(u^+)\, v\, dx - \mu \int_\Omega g(u)\, v\, dx.
\]
Then this holds for all $v \in W^{1,N}_0(\Omega)$ by density, and taking $v = u$ gives
\begin{equation} \label{13}
\norm{u}^N = \lambda \int_\Omega u^+ f(u^+)\, dx - \mu \int_\Omega u\, g(u)\, dx.
\end{equation}

Next we claim that
\begin{equation} \label{14}
\int_\Omega u_j^+ f(u_j^+)\, dx \to \int_\Omega u^+ f(u^+)\, dx.
\end{equation}
We have
\begin{equation} \label{15}
u_j^+ f(u_j^+) \le |u_j|^N e^{\beta\, |u_j|^{N'}} = |u_j|^N e^{\beta\, \norm{u_j}^{N'} |\widetilde{u}_j|^{N'}},
\end{equation}
where $\widetilde{u}_j = u_j/\norm{u_j}$. Setting $\kappa = \lambda \dint_\Omega F(u^+)\, dx - \mu \dint_\Omega G(u)\, dx$,
\[
\norm{u_j}^N \to N(c + \kappa)
\]
by \eqref{6}, \eqref{10}, and \eqref{11}, so $\widetilde{u}_j$ converges a.e.\! to $\widetilde{u} = u/[N(c + \kappa)]^{1/N}$. Then
\begin{equation} \label{16}
\norm{u_j}^N (1 - \norm{\widetilde{u}}^N) \to N(c + \kappa) - \norm{u}^N.
\end{equation}
By Lemma \ref{Lemma 1} \ref{Lemma 1 (iii)},
\[
\int_\Omega u^+ f(u^+)\, dx \ge N \int_\Omega F(u^+)\, dx,
\]
and it is easily seen that $tg(t) \le N(G(t) + 1/2)$ for all $t \in \R$ and hence
\[
\int_\Omega u\, g(u)\, dx \le N \left(\int_\Omega G(u)\, dx + \half \vol{\Omega}\right),
\]
so it follows from \eqref{13} that $\norm{u}^N \ge N(\kappa - (\mu/2) \vol{\Omega})$. Hence
\begin{equation} \label{17}
N(c + \kappa) - \norm{u}^N \le N \bigg(c + \frac{\mu}{2} \vol{\Omega}\bigg) < \left(\frac{\alpha_N}{\beta}\right)^{N-1}
\end{equation}
by \eqref{4}. We are done if $\norm{\widetilde{u}} = 1$, so suppose $\norm{\widetilde{u}} \ne 1$ and let
\[
\frac{[N(c + (\mu/2) \vol{\Omega})]^{1/(N-1)}}{(1 - \norm{\widetilde{u}}^N)^{1/(N-1)}} < \widetilde{\gamma} - 2 \eps < \widetilde{\gamma} < \frac{\alpha_N/\beta}{(1 - \norm{\widetilde{u}}^N)^{1/(N-1)}}.
\]
Then $\norm{u_j}^{N/(N-1)} \le \widetilde{\gamma} - 2 \eps$ for all $j \ge j_0$ for some $j_0$ by \eqref{16} and \eqref{17}, and
\begin{equation} \label{18}
\sup_j \int_\Omega e^{\beta\, \widetilde{\gamma}\, |\widetilde{u}_j|^{N'}} dx < \infty
\end{equation}
by Lemma \ref{Lemma 5}. For $M > 0$ and $j \ge j_0$, \eqref{15} then gives
\begin{align*}
& \phantom{\le \text{ }} \int_{\{u_j^+ \ge M\}} u_j^+ f(u_j^+)\, dx\\[2pt]
& \le \int_{\{u_j^+ \ge M\}} u_j^N e^{\beta\, (\widetilde{\gamma} - 2 \eps)\, \widetilde{u}_j^{N'}} dx\\[2pt]
& = \norm{u_j}^N \int_{\{u_j^+ \ge M\}} \widetilde{u}_j^N e^{- \eps \beta\, \widetilde{u}_j^{N'}} e^{- \eps \beta\, (u_j/\norm{u_j})^{N'}} e^{\beta\, \widetilde{\gamma}\, \widetilde{u}_j^{N'}} dx\\[2pt]
& \le \left(\max_{t > 0}\, t^N e^{- \eps \beta\, t^{N'}}\right) \norm{u_j}^N e^{- \eps \beta\, (M/\norm{u_j})^{N'}} \int_\Omega e^{\beta\, \widetilde{\gamma}\, \widetilde{u}_j^{N'}} dx.
\end{align*}
The last expression goes to zero as $M \to \infty$ uniformly in $j$ since $\norm{u_j}$ is bounded and \eqref{18} holds, so \eqref{14} now follows as in the proof of Lemma \ref{Lemma 4}.

Now it follows from \eqref{7}, \eqref{14}, \eqref{11}, and \eqref{13} that
\[
\norm{u_j}^N \to \lambda \int_\Omega u^+ f(u^+)\, dx - \mu \int_\Omega u\, g(u)\, dx = \norm{u}^N,
\]
and hence $\norm{u_j} \to \norm{u}$. So $u_j \to u$ by the uniform convexity of $W^{1,N}_0(\Omega)$. Clearly, $E_\mu(u) = c$ and $E_\mu'(u) = 0$.
\end{proof}

\section{Proof of Theorem \ref{Theorem 1}} \label{Section 3}

In this section we prove Theorem \ref{Theorem 1}. Recall that $E_\mu$ satisfies the Palais-Smale compactness condition at the level $c \in \R$, or the \PS{c} condition for short, if every sequence $\seq{u_j}$ in $W^{1,N}_0(\Omega)$ such that $E_\mu(u_j) \to c$ and $E_\mu'(u_j) \to 0$, called a \PS{c} sequence, has a convergent subsequence. The following lemma is immediate from the general compactness result in Theorem \ref{Theorem 2}.

\begin{lemma} \label{Lemma 2}
$E_\mu$ satisfies the {\em \PS{c}} condition for all $c \ne 0$ satisfying
\[
c < \frac{1}{N} \left(\frac{\alpha_N}{\beta}\right)^{N-1} - \frac{\mu}{2} \vol{\Omega}.
\]
\end{lemma}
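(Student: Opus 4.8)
The plan is to deduce Lemma \ref{Lemma 2} directly from Theorem \ref{Theorem 2} by specializing to the constant sequence $\mu_j \equiv \mu$. Suppose $\seq{u_j}$ is a \PS{c} sequence for $E_\mu$ with $c \ne 0$ satisfying the stated strict inequality. Then taking $\mu_j = \mu$ for all $j$ we have $\mu_j \to \mu \ge 0$, and the hypotheses $E_{\mu_j}(u_j) = E_\mu(u_j) \to c$ and $E_{\mu_j}'(u_j) = E_\mu'(u_j) \to 0$ hold. The compactness threshold \eqref{4} in Theorem \ref{Theorem 2} reads $c < \frac{1}{N}(\alpha_N/\beta)^{N-1} - \frac{\mu}{2}\vol{\Omega}$, which is precisely the hypothesis here. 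Hence Theorem \ref{Theorem 2} applies and yields a subsequence of $\seq{u_j}$ converging (in $W^{1,N}_0(\Omega)$) to a critical point of $E_\mu$ at level $c$. In particular $\seq{u_j}$ has a convergent subsequence, which is exactly the \PS{c} condition.

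The only point that requires a word of care is matching the two threshold constants: in Theorem \ref{Theorem 2} the subtracted term involves the \emph{limit} $\mu$ of the sequence $\seq{\mu_j}$, whereas Lemma \ref{Lemma 2} is a statement about a single fixed parameter $\mu$; the constant sequence makes these coincide, so no loss or gain occurs. There is no real obstacle here — this is a one-line corollary — and indeed the excerpt already flags it as "immediate from the general compactness result in Theorem \ref{Theorem 2}." The work has all been front-loaded into the proof of Theorem \ref{Theorem 2}; Lemma \ref{Lemma 2} merely records the special case that will be fed into the mountain pass theorem in the subsequent argument (together with Lemma \ref{Lemma 6}).

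I would therefore write the proof as follows: let $\seq{u_j}$ be a \PS{c} sequence for $E_\mu$ with $c \ne 0$ and $c < \frac{1}{N}(\alpha_N/\beta)^{N-1} - \frac{\mu}{2}\vol{\Omega}$. Apply Theorem \ref{Theorem 2} with the constant sequence $\mu_j = \mu$, whose limit is $\mu \ge 0$; the hypothesis \eqref{4} is satisfied by assumption, so a subsequence of $\seq{u_j}$ converges to a critical point of $E_\mu$ at level $c$. This proves the \PS{c} condition, and the proof is complete.

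\begin{proof}
Let $c \ne 0$ satisfy $c < \frac{1}{N} \left(\frac{\alpha_N}{\beta}\right)^{N-1} - \frac{\mu}{2} \vol{\Omega}$, and let $\seq{u_j}$ be a \PS{c} sequence for $E_\mu$, i.e., $E_\mu(u_j) \to c$ and $E_\mu'(u_j) \to 0$. Applying Theorem \ref{Theorem 2} with the constant sequence $\mu_j = \mu$ for all $j$ (so that $\mu_j \to \mu \ge 0$), whose associated threshold \eqref{4} coincides with the inequality above, we conclude that a subsequence of $\seq{u_j}$ converges to a critical point of $E_\mu$ at the level $c$. In particular, $\seq{u_j}$ has a convergent subsequence, so $E_\mu$ satisfies the \PS{c} condition.
\end{proof}
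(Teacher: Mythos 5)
Your proposal is correct and matches the paper exactly: the paper states that Lemma \ref{Lemma 2} is immediate from Theorem \ref{Theorem 2}, and the intended argument is precisely your specialization to the constant sequence $\mu_j \equiv \mu$, under which the threshold \eqref{4} coincides with the stated inequality.
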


First we show that $E_\mu$ has a uniformly positive mountain pass level below the threshold for compactness given in Lemma \ref{Lemma 2} for all sufficiently small $\mu > 0$. We may assume that $0 \in \Omega$ without loss of generality. Take $r > 0$ so small that $\closure{B_r(0)} \subset \Omega$ and let
\[
v_j(x) = \frac{1}{\omega_{N-1}^{1/N}}\, \begin{cases}
(\log j)^{(N-1)/N}, & |x| \le r/j\\[10pt]
\dfrac{\log (r/|x|)}{(\log j)^{1/N}}, & r/j < |x| < r\\[10pt]
0, & |x| \ge r.
\end{cases}
\]
It is easily seen that $v_j \in W^{1,N}_0(\Omega)$ with $\norm{v_j} = 1$ and
\begin{equation} \label{20}
\int_\Omega v_j^N\, dx = \O(1/\log j) \quad \text{as } j \to \infty.
\end{equation}

\begin{lemma} \label{Lemma 6}
There exist $\mu_0, \rho, c_0 > 0$, $j_0 \ge 2$, $R > \rho$, and $\vartheta < \dfrac{1}{N} \left(\dfrac{\alpha_N}{\beta}\right)^{N-1}$ such that the following hold for all $\mu \in (0,\mu_0)$:
\begin{enumroman}
\item \label{Lemma 6 (i)} $\norm{u} = \rho \implies E_\mu(u) \ge c_0$,
\item \label{Lemma 6 (ii)} $E_\mu(Rv_{j_0}) \le 0$,
\item \label{Lemma 6 (iii)} denoting by $\Gamma = \set{\gamma \in C([0,1],W^{1,N}_0(\Omega)) : \gamma(0) = 0,\, \gamma(1) = Rv_{j_0}}$ the class of paths joining the origin to $Rv_{j_0}$,
    \begin{equation} \label{21}
    c_0 \le c_\mu := \inf_{\gamma \in \Gamma}\, \max_{u \in \gamma([0,1])}\, E_\mu(u) \le \vartheta + C_\lambda\, \mu^{N'},
    \end{equation}
    where $C_\lambda = (1 - 1/N) \vol{\Omega}/\lambda^{1/(N-1)}$,
\item \label{Lemma 6 (iv)} $E_\mu$ has a critical point $u_\mu$ at the level $c_\mu$.
\end{enumroman}
\end{lemma}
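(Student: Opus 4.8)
The plan is to verify the mountain pass geometry for $E_\mu$ uniformly in small $\mu$, obtain the upper bound on the minimax level using the Moser-type functions $v_j$, and then invoke Lemma \ref{Lemma 2} to get the critical point.

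\textbf{Step 1: The local minimum geometry, part \ref{Lemma 6 (i)}.} First I would establish that near the origin $E_\mu$ is bounded below by a positive constant on a small sphere. Using Lemma \ref{Lemma 1} \ref{Lemma 1 (iv)}, the Trudinger-Moser inequality \eqref{2}, and the H\"older inequality, one controls $\int_\Omega F(u^+)\, dx$ by a term that is $\o(\norm{u}^N)$ plus a higher-order term: more precisely, for $\norm{u}$ small enough that $\beta\, \norm{u}^{N'}$ stays below $\alpha_N$ (after extracting an extra power of $|u|$ via H\"older), one gets $\lambda \int_\Omega F(u^+)\, dx \le (\lambda/\lambda_1 + o(1)) \norm{u}^N/N + C \norm{u}^{N+\delta}$ for some $\delta > 0$, using the variational characterization $\lambda_1 = \inf \set{\norm{u}^N : \pnorm[N]{u} = 1}$ to absorb the leading $t^N$ term. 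Since $G \ge 0$, the term $\mu \int_\Omega G(u)\, dx$ only helps. Because $\lambda < \lambda_1$, choosing $\rho$ small gives $E_\mu(u) \ge c_0 > 0$ on $\norm{u} = \rho$, with $c_0$ and $\rho$ independent of $\mu \in (0,\mu_0)$.

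\textbf{Step 2: The mountain pass endpoint, part \ref{Lemma 6 (ii)}.} Here I would use the lower bound Lemma \ref{Lemma 1} \ref{Lemma 1 (v)}, which shows $F(t)$ grows at least like $t^{N^2/(N-1)}$, hence $\int_\Omega F((tv_{j_0})^+)\, dx$ grows super-$N$-th-power in $t$ (in fact the exponential lower bound $e^t \ge 1+t$ already gives enough; one could even keep more terms of the exponential for a cleaner blow-up). Meanwhile $\norm{tv_{j_0}}^N = t^N$ and $\mu \int_\Omega G(tv_{j_0})\, dx \le \mu\, t \int_\Omega v_{j_0}\, dx$ is only linear in $t$. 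Thus $E_\mu(tv_{j_0}) \to -\infty$ as $t \to \infty$, uniformly for $\mu \in (0,\mu_0)$ since the $\mu$-term is dominated; fix $j_0$ and pick $R > \rho$ with $E_\mu(Rv_{j_0}) \le 0$.

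\textbf{Step 3: The minimax upper bound, part \ref{Lemma 6 (iii)}.} The left inequality $c_0 \le c_\mu$ is immediate from \ref{Lemma 6 (i)}: every path in $\Gamma$ crosses the sphere $\norm{u} = \rho$. For the right inequality, along the ray $\gamma(s) = sR v_{j_0}$ one estimates $\max_{t \ge 0} E_\mu(t v_{j_0})$. Write $E_\mu(t v_{j_0}) = t^N/N - \lambda \int_\Omega F(t v_{j_0})\, dx + \mu \int_\Omega G(t v_{j_0})\, dx$. Bounding $G(t) \le t$ gives $\mu \int_\Omega G(t v_{j_0})\, dx \le \mu\, t\, \pnorm[1]{v_{j_0}}$, and one should optimize: the function $t \mapsto t^N/N - \lambda \int F + \mu t \pnorm[1]{v_{j_0}}$ has its max where the $N$-th power and exponential terms balance; a Young-type inequality $\mu\, t\, \pnorm[1]{v_{j_0}} \le (1-1/N) (\mu \pnorm[1]{v_{j_0}})^{N'}/\lambda^{1/(N-1)} \cdot (\text{const}) + (\lambda/N) \cdot (\text{something})$... more carefully, one shows $\max_{t\ge 0}[t^N/N - \lambda \int F(t v_{j_0}) + \mu t \pnorm[1]{v_{j_0}}] \le \max_{t \ge 0}[t^N/N - \lambda \int F(t v_{j_0})] + C_\lambda \mu^{N'}$, where the $\mu^{N'}$ term arises from $\sup_t (\mu t \pnorm[1]{v_{j_0}} - \lambda(\text{correction from }F))$ and $\pnorm[1]{v_{j_0}} \le \vol{\Omega}^{1-1/N}\pnorm[N]{v_{j_0}}$; tracking constants gives $C_\lambda = (1-1/N)\vol{\Omega}/\lambda^{1/(N-1)}$. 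Finally one needs $\sup_t [t^N/N - \lambda \int F(t v_{j_0})] =: \vartheta < \frac{1}{N}(\alpha_N/\beta)^{N-1}$; this is the classical Moser-function computation (cf.\ Adimurthi \cite{MR1079983}, do \'O \cite{MR1392090}), using \eqref{20} and the fact that $\norm{v_{j_0}} = 1$ to show the peak of the fibering map lies strictly below the Trudinger-Moser threshold for $j_0$ large enough — this is exactly where \ref{Lemma 1 (v)} is used to gain the strict inequality.

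\textbf{Step 4: Existence of the critical point, part \ref{Lemma 6 (iv)}.} Shrinking $\mu_0$ if necessary so that $\vartheta + C_\lambda \mu^{N'} < \frac{1}{N}(\alpha_N/\beta)^{N-1} - \frac{\mu}{2}\vol{\Omega}$ for all $\mu \in (0,\mu_0)$ (possible since $\mu^{N'} = \mu^{N/(N-1)}$ is of higher order than $\mu$ as $\mu \to 0$, and $\vartheta$ is strictly below the threshold), the level $c_\mu \in [c_0, \vartheta + C_\lambda \mu^{N'}]$ satisfies $c_\mu \ne 0$ and the compactness bound of Lemma \ref{Lemma 2}. The mountain pass theorem then produces a \PS{c_\mu} sequence, which has a convergent subsequence by Lemma \ref{Lemma 2}, yielding a critical point $u_\mu$ at level $c_\mu$.

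\textbf{Main obstacle.} The delicate point is Step 3: producing the explicit constant $C_\lambda$ and the strict inequality $\vartheta < \frac{1}{N}(\alpha_N/\beta)^{N-1}$ \emph{simultaneously}, while keeping everything uniform in $\mu$. One must carefully separate the $\mu$-dependent linear perturbation from the $\mu$-independent Moser estimate, handle the optimization over $t$ so that the cross term genuinely contributes only $\O(\mu^{N'})$, and choose $j_0$ (independent of $\mu$) large enough that the classical estimate gives the strict gap — all before fixing $R$ and $\mu_0$. The interplay of the three requirements (uniform lower bound $c_0$, endpoint below zero, minimax below threshold) forces the order of choosing the constants to be done with care.
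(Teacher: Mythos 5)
Your overall strategy is the same as the paper's (uniform mountain pass geometry, H\"older plus Young's inequality to split off the $C_\lambda\,\mu^{N'}$ term, Moser functions $v_j$ for the threshold estimate, then Lemma \ref{Lemma 2} and the mountain pass theorem), but the technical heart of Step 3 is missing. After H\"older and Young you actually get $\mu t \int_\Omega v_j\, dx \le C_\lambda\, \mu^{N'} + \frac{\lambda t^N}{N} \int_\Omega v_j^N\, dx$, so the quantity that must be pushed below the Trudinger--Moser threshold is not $\sup_{t \ge 0}\bigl[t^N/N - \lambda \int_\Omega F(t v_j)\, dx\bigr]$ but $\sup_{t \ge 0} H_j(t)$ with $H_j(t) = \frac{t^N}{N}(1 + \eps_j) - \lambda \int_\Omega F(t v_j)\, dx$ and $\eps_j = \lambda \int_\Omega v_j^N\, dx$; your claimed reduction (the step written with an ellipsis, asserting that the linear term contributes only $C_\lambda\,\mu^{N'}$ on top of the unperturbed fibering maximum) is not proved, and the extra $\eps_j t^N/N$ cannot simply be discarded since the supremum is over all $t$. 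More seriously, the strict inequality $\vartheta < \frac{1}{N}(\alpha_N/\beta)^{N-1}$ for some fixed $j_0$ --- the whole point of the lemma --- is deferred to ``the classical Moser-function computation,'' and your description of it is off: Lemma \ref{Lemma 1} \ref{Lemma 1 (v)} serves only to show $H_j(t) \to -\infty$ as $t \to \infty$, so that the supremum is attained at some finite $t_j$. The strict gap itself comes from a contradiction argument: if $\sup_{t\ge0} H_j(t) \ge \frac{1}{N}(\alpha_N/\beta)^{N-1}$ for every $j$, then at the maximizer one has $H_j'(t_j) = 0$, the level bound together with $F \ge 0$ forces $\beta t_j^{N'} \ge \alpha_N/(1+\eps_j)$, and evaluating the critical-point identity on $B_{r/j}(0)$ yields $\frac{1+\eps_j}{\lambda} \ge \frac{r^N}{N}\, \frac{(\log j)^{N-1}}{j^{N\eps_j/(1+\eps_j)}}$, which is impossible for large $j$ because $\eps_j = \O(1/\log j)$ by \eqref{20} gives $j^{N\eps_j} = \O(1)$. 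None of this appears in your proposal, and without it parts \ref{Lemma 6 (ii)}--\ref{Lemma 6 (iv)} are not established (the $\eps_j$ correction also must be carried through this argument, which is exactly why it is delicate).

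There is also a smaller but genuine error in Step 1: $G$ is not nonnegative. Indeed $G(t) = t + t^2/2 < 0$ for $t \in (-1,0)$ and $G(t) = -1/2$ for $t \le -1$, so the term $\mu \int_\Omega G(u)\, dx$ does not ``only help.'' The correct bound is $G \ge -1/2$, which costs $-\frac{\mu}{2}\vol{\Omega}$ in the sphere estimate; this is precisely why the uniform constant $c_0$ in \ref{Lemma 6 (i)} requires $\mu < \mu_0$ with $\mu_0$ small, whereas your argument as written would wrongly suggest that \ref{Lemma 6 (i)} holds for every $\mu > 0$.
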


\begin{proof}
Set $\rho = \norm{u}$ and $\widetilde{u} = u/\rho$. By Lemma \ref{Lemma 1} \ref{Lemma 1 (iv)} and since
\[
\lambda_1 = \inf_{u \in W^{1,N}_0(\Omega) \setminus \set{0}}\, \frac{\dint_\Omega |\nabla u|^N\, dx}{\dint_\Omega |u|^N\, dx},
\]
we have
\begin{align*}
\int_\Omega F(u^+)\, dx & \le \int_\Omega \left[\frac{1}{N}\, |u|^N + \frac{\beta}{N}\, |u|^{N^2/(N-1)} e^{\beta\, |u|^{N'}}\right] dx\\[2pt]
& \le \frac{1}{N \lambda_1} \norm{u}^N + \frac{\beta}{N} \pnorm[2N^2/(N-1)]{u}^{N^2/(N-1)} \left(\int_\Omega e^{2 \beta\, |u|^{N'}} dx\right)^{1/2}\\[2pt]
& = \frac{\rho^N}{N \lambda_1} + \frac{\beta \rho^{N^2/(N-1)}}{N} \pnorm[2N^2/(N-1)]{\widetilde{u}}^{N^2/(N-1)} \left(\int_\Omega e^{2 \beta \rho^{N'} |\widetilde{u}|^{N'}} dx\right)^{1/2}\\[2pt]
& = \frac{\rho^N}{N \lambda_1} + \O(\rho^{N^2/(N-1)}) \quad \text{as } \rho \to 0
\end{align*}
since $W^{1,N}_0(\Omega) \hookrightarrow L^{2N^2/(N-1)}(\Omega)$ and $\dint_\Omega e^{2 \beta \rho^{N'} |\widetilde{u}|^{N'}} dx$ is bounded by \eqref{2} when $2 \beta \rho^{N'} \le \alpha_N$. Since $G(t) \ge -1/2$ for all $t \in \R$, then
\[
E_\mu(u) \ge \frac{1}{N} \left(1 - \frac{\lambda}{\lambda_1}\right) \rho^N + \O(\rho^{N^2/(N-1)}) - \frac{\mu}{2} \vol{\Omega}.
\]
Since $\lambda < \lambda_1$, \ref{Lemma 6 (i)} follows from this for sufficiently small $\rho, \mu, c_0 > 0$.

Since $v_j \ge 0$,
\[
E_\mu(tv_j) = \int_\Omega \left[\frac{t^N}{N}\, |\nabla v_j|^N - \lambda F(tv_j) + \mu tv_j\right] dx
\]
for $t \ge 0$. By the H\"{o}lder and Young's inequalities,
\[
\mu t \int_\Omega v_j\, dx \le \mu t \vol{\Omega}^{1-1/N} \left(\int_\Omega v_j^N\, dx\right)^{1/N} \le C_\lambda\, \mu^{N'} + \frac{\lambda t^N}{N} \int_\Omega v_j^N\, dx,
\]
so
\[
E_\mu(tv_j) \le H_j(t) + C_\lambda\, \mu^{N'},
\]
where
\[
H_j(t) = \frac{t^N}{N} \left(1 + \lambda \int_\Omega v_j^N\, dx\right) - \lambda \int_\Omega F(tv_j)\, dx.
\]
By Lemma \ref{Lemma 1} \ref{Lemma 1 (v)},
\[
\int_\Omega F(tv_j)\, dx \ge \frac{t^N}{N} \int_\Omega v_j^N\, dx + \frac{\beta (N - 1)}{N^2}\, t^{N^2/(N-1)} \int_\Omega v_j^{N^2/(N-1)}\, dx,
\]
so
\begin{equation} \label{22}
H_j(t) \le \frac{t^N}{N} - \frac{\lambda \beta (N - 1)}{N^2}\, t^{N^2/(N-1)} \int_\Omega v_j^{N^2/(N-1)}\, dx \to - \infty \quad \text{as } t \to \infty.
\end{equation}
So to prove \ref{Lemma 6 (ii)} and \ref{Lemma 6 (iii)}, it suffices to show that $\exists j_0 \ge 2$ such that
\[
\vartheta := \sup_{t \ge 0}\, H_{j_0}(t) < \frac{1}{N} \left(\frac{\alpha_N}{\beta}\right)^{N-1}.
\]

Suppose $\sup_{t \ge 0}\, H_j(t) \ge (\alpha_N/\beta)^{N-1}/N$ for all $j$. Since $H_j(t) \to - \infty$ as $t \to \infty$ by \eqref{22}, there exists $t_j \ge 0$ such that
\begin{equation} \label{23}
H_j(t_j) = \frac{t_j^N}{N}\, (1 + \eps_j) - \lambda \int_\Omega F(t_j v_j)\, dx = \sup_{t \ge 0}\, H_j(t) \ge \frac{1}{N} \left(\frac{\alpha_N}{\beta}\right)^{N-1}
\end{equation}
and
\begin{equation} \label{24}
H_j'(t_j) = t_j^{N-1} \left(1 + \eps_j - \lambda \int_\Omega v_j^N e^{\beta t_j^{N'}\! v_j^{N'}} dx\right) = 0,
\end{equation}
where
\[
\eps_j = \lambda \int_\Omega v_j^N\, dx.
\]
Since $F(t) \ge 0$ for all $t \ge 0$, \eqref{23} gives
\[
\beta t_j^{N'} \ge \frac{\alpha_N}{1 + \eps_j},
\]
and then \eqref{24} gives
\begin{equation} \label{25}
\frac{1 + \eps_j}{\lambda} = \int_\Omega v_j^N e^{\beta t_j^{N'}\! v_j^{N'}} dx \ge \int_{B_{r/j}(0)} v_j^N e^{\alpha_N v_j^{N'}/(1 + \eps_j)}\, dx = \frac{r^N}{N}\, \frac{(\log j)^{N-1}}{j^{N \eps_j/(1 + \eps_j)}}.
\end{equation}
By \eqref{20}, $\eps_j \to 0$ and
\[
j^{N \eps_j/(1 + \eps_j)} \le j^{N \eps_j} = e^{N \eps_j \log j} = \O(1),
\]
so \eqref{25} is impossible for large $j$.

By \ref{Lemma 6 (i)}--\ref{Lemma 6 (iii)}, $E_\mu$ has the mountain pass geometry and the mountain pass level $c_\mu$ satisfies
\[
0 < c_\mu \le \vartheta + C_\lambda\, \mu^{N'} < \frac{1}{N} \left(\frac{\alpha_N}{\beta}\right)^{N-1} - \frac{\mu}{2} \vol{\Omega}
\]
for all sufficiently small $\mu > 0$, so $E_\mu$ satisfies the \PS{c_\mu} condition by Lemma \ref{Lemma 2}. So $E_\mu$ has a critical point $u_\mu$ at this level by the mountain pass theorem.
\end{proof}

Now we show that $u_\mu$ is positive in $\Omega$, and hence a weak solution of problem \eqref{1}, for all sufficiently small $\mu \in (0,\mu_0)$. It suffices to show that for every sequence $\mu_j > 0,\, \mu_j \to 0$, a subsequence of $u_j = u_{\mu_j}$ is positive in $\Omega$. By \eqref{21}, a renamed subsequence of $c_{\mu_j}$ converges to some $c$ satisfying
\[
0 < c < \frac{1}{N} \left(\frac{\alpha_N}{\beta}\right)^{N-1}.
\]
Then a renamed subsequence of $\seq{u_j}$ converges in $W^{1,N}_0(\Omega)$ to a critical point $u$ of $E_0$ at the level $c$ by Theorem \ref{Theorem 2}. Since $c > 0$, $u$ is nontrivial.

\begin{lemma} \label{Lemma 7}
A further subsequence of $\seq{u_j}$ is bounded in $C^{1,\alpha}_0(\closure{\Omega})$ for some $\alpha \in (0,1)$.
\end{lemma}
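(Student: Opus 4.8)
The plan is to follow exactly the roadmap sketched in the introduction: first upgrade the $W^{1,N}_0(\Omega)$ convergence $u_j \to u$ (from Theorem \ref{Theorem 2}) to an $L^\infty(\Omega)$ bound, then invoke Lieberman's global regularity theorem to get a uniform $C^{1,\alpha}_0(\closure{\Omega})$ bound. The first step is the substantive one. Since $u_j \to u$ strongly in $W^{1,N}_0(\Omega)$ with $\norm{u} \ne 0$, writing $\widetilde{u}_j = u_j/\norm{u_j}$ we have $\norm{\widetilde{u}_j} = 1$, $\widetilde{u}_j \to \widetilde{u} = u/\norm{u}$ a.e., and $\norm{\widetilde{u}} = 1$. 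This last equality is the key subtlety: Lemma \ref{Lemma 5} as stated only gives an exponential integrability bound for $b < \alpha_N/(1 - \norm{\widetilde{u}}^N)^{1/(N-1)}$, which is vacuous when $\norm{\widetilde{u}} = 1$. So I cannot apply Lemma \ref{Lemma 5} directly to the normalized sequence; instead I should exploit the \emph{strong} convergence $\norm{u_j - u} \to 0$ together with the genuine Trudinger-Moser inequality \eqref{2}.

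The concrete argument for the $L^\infty$ bound: fix any $b > 0$. I want $\sup_j \int_\Omega e^{b\,|u_j|^{N'}}\, dx < \infty$ (this is what the excerpt attributes to ``Lemma \ref{Lemma 8}''). Write $u_j = u + (u_j - u)$ and use the elementary convexity inequality $|a+b|^{N'} \le (1+\eps)\,|a|^{N'} + C_\eps\,|b|^{N'}$ (valid since $N' = N/(N-1) \in (1,2]$), together with Young's inequality $e^{xy} \le \half(e^{x^2} + e^{y^2})$ or more simply $e^{p+q} = e^p e^q$ and Hölder, to split
\[
\int_\Omega e^{b\,|u_j|^{N'}}\, dx \le \left(\int_\Omega e^{b p (1+\eps)\,|u|^{N'}}\, dx\right)^{1/p}\left(\int_\Omega e^{b q C_\eps\,|u_j - u|^{N'}}\, dx\right)^{1/q}.
\]
The first factor is finite and $j$-independent because $u \in W^{1,N}_0(\Omega)$ implies $e^{s|u|^{N'}} \in L^1(\Omega)$ for every $s$ (apply \eqref{2} to $u/\norm{u}$ after choosing $s$; more precisely cover by finitely many pieces or just note $e^{s|u|^{N'}} \in L^1$ for all $s$ since $|u|^{N'}/\norm{u}^{N'}$ integrates against $e^{\alpha_N(\cdot)}$). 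For the second factor, since $\norm{u_j - u} \to 0$ we have $b q C_\eps\,\norm{u_j - u}^{N'} < \alpha_N$ for all large $j$, so writing $|u_j - u|^{N'} = \norm{u_j-u}^{N'}\,|w_j|^{N'}$ with $\norm{w_j} = 1$ and applying \eqref{2} gives a uniform bound for large $j$; the finitely many remaining $j$ are handled individually (each $u_j \in W^{1,N}_0$). Hence $\sup_j \int_\Omega e^{b\,|u_j|^{N'}}\, dx < \infty$ for every $b > 0$.

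From this, $f(u_j^+) = (u_j^+)^{N-1} e^{\beta (u_j^+)^{N'}}$ is bounded in $L^s(\Omega)$ for every $s > 1$: by Hölder, $\int_\Omega (u_j^+)^{(N-1)s} e^{s\beta(u_j^+)^{N'}}\, dx \le \norm[(N-1)st']{u_j^+}^{(N-1)s} \big(\int_\Omega e^{s t \beta |u_j|^{N'}}\,dx\big)^{1/t}$ with $1/t+1/t'=1$, and both factors are bounded since $\seq{u_j}$ is bounded in every $L^p$ (being bounded in $W^{1,N}_0$) and the exponential integral is uniformly bounded by the previous step. Since $g$ is bounded, the right-hand side $\lambda f(u_j^+) - \mu_j g(u_j)$ of the equation $-\Delta_N u_j = \lambda f(u_j^+) - \mu_j g(u_j)$ is bounded in $L^s(\Omega)$ for some $s > 1$ (in fact all $s$), so Guedda–Véron \cite[Proposition 1.3]{MR1009077} gives $\sup_j \norm[\infty]{u_j} < \infty$. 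Finally, the right-hand side is then bounded in $L^\infty(\Omega)$, so Lieberman's global estimate \cite{MR969499} for the $N$-Laplacian with bounded right-hand side on the smooth domain $\Omega$ yields $\sup_j \norm[C^{1,\alpha}(\closure{\Omega})]{u_j} < \infty$ for some $\alpha \in (0,1)$, after passing to a further subsequence if needed for the Hölder exponent to stabilize. The main obstacle, as noted, is the first step — handling the borderline case $\norm{\widetilde{u}} = 1$ — which is precisely why one must use the strong $W^{1,N}_0$ convergence delivered by Theorem \ref{Theorem 2} rather than mere weak convergence together with Lemma \ref{Lemma 5}.
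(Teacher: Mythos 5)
Your proposal is correct and follows essentially the same route as the paper: the paper's proof of this lemma is exactly your chain (uniform bound on $\int_\Omega e^{b|u_j|^{N'}}dx$ via the splitting $u_j=u+(u_j-u)$, H\"older, and \eqref{2} applied to $(u_j-u)/\norm{u_j-u}$ once $\norm{u_j-u}\to 0$ --- this is the paper's Lemma \ref{Lemma 8} --- then H\"older to bound $f(u_j^+)$ in $L^s$, Guedda--V\'eron for the $L^\infty$ bound, and Lieberman for the uniform $C^{1,\alpha}_0(\closure{\Omega})$ estimate), and you correctly identified why Lemma \ref{Lemma 5} is useless here and strong convergence must be used instead. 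The only slight blemish is your parenthetical justification that $e^{s|u|^{N'}}\in L^1(\Omega)$ for every $s$: applying \eqref{2} to $u/\norm{u}$ does not work for large $s$, and the standard argument is to split $u=v+w$ with $v$ bounded and $\norm{w}$ small; the fact itself is the same well-known one the paper invokes without proof, so this does not affect correctness.
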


\begin{proof}
Since
\[
\left\{\begin{aligned}
- \Delta_N\, u_j & = \lambda f(u_j^+) - \mu_j\, g(u_j) && \text{in } \Omega\\[10pt]
u_j & = 0 && \text{on } \bdry{\Omega},
\end{aligned}\right.
\]
it suffices to show that $\seq{u_j}$ is bounded in $L^\infty(\Omega)$ by the global regularity result of Lieberman \cite{MR969499}, and this will follow from Proposition 1.3 of Guedda and V{\'e}ron \cite{MR1009077} if we show that $f(u_j^+)$ is bounded in $L^s(\Omega)$ for some $s > 1$.

Let $s > 1$. By the H\"{o}lder inequality,
\[
\left(\int_\Omega |f(u_j^+)|^s\, dx\right)^{1/s} \le \left(\int_\Omega |u_j|^p\, dx\right)^{(N-1)/p} \left(\int_\Omega e^{q \beta\, |u_j|^{N'}} dx\right)^{1/q},
\]
where $(N - 1)/p + 1/q = 1/s$. The first integral on the right-hand side is bounded since $W^{1,N}_0(\Omega) \hookrightarrow L^p(\Omega)$, and so is the second integral by Lemma \ref{Lemma 8} below.
\end{proof}

\begin{lemma} \label{Lemma 8}
If $\seq{u_j}$ is a convergent sequence in $W^{1,N}_0(\Omega)$, then
\[
\sup_j \int_\Omega e^{b\, |u_j|^{N'}} dx < \infty
\]
for all $b$.
\end{lemma}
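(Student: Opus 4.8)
The plan is to reduce the claim to an application of Lemma~\ref{Lemma 5}. Since $\seq{u_j}$ converges in $W^{1,N}_0(\Omega)$, write $u_j \to u$ and note first that $\norm{u_j} \to \norm{u}$. If $u = 0$, then $\norm{u_j} \to 0$, and for $j$ large we have $b\beta^{-1}\alpha_N^{-1}\norm{u_j}^{N'} \le 1$ (here I am tacitly assuming $b > 0$; for $b \le 0$ the integrand is bounded by $\vol{\Omega}$ and there is nothing to prove), so that $b\,|u_j|^{N'} = \alpha_N\,|u_j/\norm{u_j}|^{N'}\cdot(b\beta^{-1}\alpha_N^{-1}\norm{u_j}^{N'})\cdot\beta \le \alpha_N\,|\widehat{u}_j|^{N'}$ with $\widehat{u}_j = u_j/\norm{u_j}$ satisfying $\norm{\widehat{u}_j} = 1$; then $\sup_j \int_\Omega e^{b\,|u_j|^{N'}} dx < \infty$ by the Trudinger-Moser inequality \eqref{2} (only finitely many early terms need be absorbed separately, each being finite by \eqref{2} applied to $u_j/\norm{u_j}$).

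The substantive case is $u \ne 0$. Fix $b > 0$. Choose $\delta > 0$ so small that
\[
b\,(\norm{u} + \delta)^{N'} < \frac{\alpha_N}{\beta}\,\bigl(1 - \norm{\widehat{u}}^N\bigr)
\quad\text{where } \widehat{u} = \frac{u}{\norm{u}},
\]
which is possible since $\norm{\widehat{u}} = 1$ would force the right side to vanish, but $\norm{\widehat{u}} = \norm{u}/\norm{u} = 1$; so this choice must be handled more carefully. In fact $\widehat{u}_j := u_j/\norm{u_j} \to u/\norm{u} =: \widehat{u}$ in $W^{1,N}_0(\Omega)$, hence a.e., and $\norm{\widehat{u}} = 1$, so Lemma~\ref{Lemma 5} is \emph{not} directly applicable to $\widehat{u}_j$. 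The correct device is to compare with a \emph{fixed} normalized function: since $u_j \to u$ strongly, for any $\eps > 0$ we have $\norm{u_j - u} < \eps$ for $j$ large, so that by the elementary inequality $|u_j|^{N'} \le (1 + \eps)\,|u|^{N'} + C_\eps\,|u_j - u|^{N'}$ (valid since $N' \le 2$, using convexity/Young), we get
\[
b\,|u_j|^{N'} \le b(1+\eps)\,|u|^{N'} + bC_\eps\,|u_j - u|^{N'}.
\]
By Young's inequality $e^{a+b} \le \tfrac12 e^{2a} + \tfrac12 e^{2b}$, it then suffices to bound $\int_\Omega e^{2b(1+\eps)|u|^{N'}} dx$ (finite by \eqref{2}, taking $\eps$ small so that $2b(1+\eps)\beta^{-1}\alpha_N^{-1}\norm{u}^{N'} \le 1$, which is possible only if $b$ is small — so instead split $u$ into finitely many pieces, or more simply invoke that $e^{2b(1+\eps)|u|^{N'}} \in L^1(\Omega)$ by the full Trudinger-Moser inequality, which holds for \emph{every} fixed $W^{1,N}_0$ function regardless of norm) and $\sup_j \int_\Omega e^{2bC_\eps |u_j - u|^{N'}} dx$. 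For the latter, since $\norm{u_j - u} \to 0$, for $j$ large $2bC_\eps\beta^{-1}\alpha_N^{-1}\norm{u_j-u}^{N'} \le 1$ and we conclude exactly as in the $u = 0$ case by rescaling $u_j - u$ to the unit sphere and applying \eqref{2}.

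The main obstacle I anticipate is precisely the pitfall flagged above: Lemma~\ref{Lemma 5} requires the limit of the \emph{normalized} sequence to be nonzero, but when one normalizes a strongly convergent sequence with nonzero limit one lands on the unit sphere, where Lemma~\ref{Lemma 5} gives no gain. The resolution is to avoid Lemma~\ref{Lemma 5} altogether and instead exploit strong convergence directly: split the exponent of $u_j$ into a fixed part (controlled by the ordinary Trudinger-Moser inequality \eqref{2}, which applies to any fixed function after splitting its domain into pieces of small norm or simply because $e^{c|v|^{N'}} \in L^1$ for all $c$ and all fixed $v \in W^{1,N}_0$) plus a small-norm remainder $u_j - u$ (controlled because its norm tends to zero, putting $c\,|u_j - u|^{N'}$ below the Trudinger-Moser threshold for all large $j$), and combine via Young's inequality for exponentials. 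One should double-check the elementary inequality $|a+b|^{N'} \le (1+\eps)|a|^{N'} + C_\eps|b|^{N'}$ for $1 < N' \le 2$; this is standard and follows from the mean value theorem together with $N' \le 2$, or from convexity of $t \mapsto t^{N'}$.
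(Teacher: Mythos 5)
Your final argument is correct and is essentially the paper's own proof: both decompose $u_j = u + (u_j - u)$, use the elementary inequality $|u_j|^{N'} \le C\bigl(|u|^{N'} + |u_j - u|^{N'}\bigr)$, split the exponential by H\"older/Young, note that $e^{c\,|u|^{N'}} \in L^1(\Omega)$ for the fixed limit $u$ and every $c$, and control the remainder by rescaling $u_j - u$ (whose norm tends to $0$) below the Trudinger--Moser threshold \eqref{2}. The detour through Lemma~\ref{Lemma 5}, which you correctly abandon since the normalized limit has norm $1$, is not part of the final argument and does not affect its validity.
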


\begin{proof}
The case $b \le 0$ is trivial, so suppose $b > 0$ and let $u \in W^{1,N}_0(\Omega)$ be the limit of $\seq{u_j}$. We have
\[
|u_j|^{N'} \le (|u| + |u_j - u|)^{N'} \le 2^{N'} \big(|u|^{N'} + |u_j - u|^{N'}\big),
\]
so
\[
\int_\Omega e^{b\, |u_j|^{N'}} dx \le \left(\int_\Omega e^{2^{N'+1} b\, |u|^{N'}} dx\right)^{1/2} \left(\int_\Omega e^{2^{N'+1} b\, |u_j - u|^{N'}} dx\right)^{1/2}.
\]
The first integral on the right-hand side is finite, and the second integral equals
\[
\int_\Omega e^{2^{N'+1} b\, \norm{u_j - u}^{N'} |v_j|^{N'}} dx,
\]
where $v_j = (u_j - u)/\norm{u_j - u}$. Since $\norm{v_j} = 1$ and $\norm{u_j - u} \to 0$, this integral is bounded by \eqref{2}.
\end{proof}

By Lemma \ref{Lemma 7}, a renamed subsequence of $u_j$ converges to $u$ in $C^1_0(\closure{\Omega})$. Since $u$ is a nontrivial weak solution of the problem
\[
\left\{\begin{aligned}
- \Delta_N\, u & = \lambda\, (u^+)^{N-1} e^{\beta\, (u^+)^{N'}} && \text{in } \Omega\\[10pt]
u & = 0 && \text{on } \bdry{\Omega},
\end{aligned}\right.
\]
$u > 0$ in $\Omega$ and its interior normal derivative $\partial u/\partial \nu > 0$ on $\bdry{\Omega}$ by the strong maximum principle and the Hopf lemma for the $p$-Laplacian (see V{\'a}zquez \cite{MR768629}). Since $u_j \to u$ in $C^1_0(\closure{\Omega})$, then $u_j > 0$ in $\Omega$ for all sufficiently large $j$. This concludes the proof of Theorem \ref{Theorem 1}.

\subsection*{Acknowledgement}
The second author was supported by the
2018-0340 Research Fund of the University of Ulsan.

\def\cdprime{$''$}

\end{document}